\newtheorem{theorem}{Theorem}[section]
\newtheorem{definition}[theorem]{Definition}
\newtheorem{lemma} [theorem]{Lemma}
\newtheorem{proposition}[theorem]{Proposition}
\newtheorem{remark}[theorem]{Remark}
\title{\bf {\sc The Sparing Number of the Cartesian Products of Certain Graphs}}
\author{{\bf K P Chithra\footnote{Naduvath Mana, Nandikkara, Thrissur-680301, India. {\em chithrasudev@gmail.com} }},~ {\bf K A Germina\footnote{Department of Mathematics, School of Mathematical \& Physical Sciences, Central University of Kerala, Kasaragod-671316, email:{\em srgerminaka@gmail.com}}} ~and~ {\bf N K Sudev\footnote{Department of Mathematics, Vidya Academy of Science \& Technology, Thalakkottukara, Thrissur - 680501, India. email: {\em sudevnk@gmail.com}}}}
\date{}
\begin{document}
\maketitle

\begin{abstract}
Let $\mathbb{N}_0$ be the set of all non-negative integers. An integer additive set-indexer (IASI) is defined as an injective function $f:V(G)\rightarrow \mathcal{P}(\mathbb{N}_0)$ such that the induced function $f^+:E(G) \rightarrow \mathcal{P}(\mathbb{N}_0)$ defined by $f^+ (uv) = f(u)+ f(v)$ is also injective, where $f(u)+f(v)$ is the sumset of $f(u)$ and $f(v)$ and $\mathcal{P}(\mathbb{N}_0)$ is the power set of $\mathbb{N}_0$. If $f^+(uv)=k~\forall~uv\in E(G)$, then $f$ is said to be a $k$-uniform integer additive set-indexer. An integer additive set-indexer $f$ is said to be a weak integer additive set-indexer if $|f^+(uv)|=max(|f(u)|,|f(v)|)~\forall ~ uv\in E(G)$. In this paper, we study about the sparing number of the cartesian product of two graphs. 
\end{abstract}

\noindent \textbf{Key Words:} Integer additive set-indexers, mono-indexed elements of a graphs, weak integer additive set-indexers, sparing number of a graph.
\newline
\textbf{AMS Subject Classification: 05C78}

\section{Introduction}
For all  terms and definitions, not defined specifically in this paper, we refer to \cite{FH} and for more about graph products we refer to \cite{HIS}. Unless mentioned otherwise, all graphs considered here are simple, finite and have no isolated vertices.

Let $\mathbb{N}_0$ denote the set of all non-negative integers. For all $A, B \subseteq \mathbb{N}_0$, the sum of these sets is denoted by  $A+B$ and is defined by $A + B = \{a+b: a \in A, b \in B\}$. The set $A+B$ is called the {\em sumset} of the sets $A$ and $B$. If either $A$ or $B$ is countably infinite, then their sumset is also countably infinite. Hence, the sets we consider here are all finite sets of non-negative integers. The cardinality of a set $A$ is denoted by $|A|$. The power set of a set $A$ is denoted by $\mathcal{P}(A)$.

\begin{definition}\label{D2}{\rm
\cite{GA} An {\em integer additive set-indexer} (IASI, in short) is defined as an injective function $f:V(G)\rightarrow \mathcal{P}(\mathbb{N}_0)$ such that the induced function $f^+:E(G) \rightarrow \mathcal{P}(\mathbb{N}_0)$ defined by $f^+ (uv) = f(u)+ f(v)$ is also injective}.
\end{definition}

\begin{lemma}
\cite{GS1} If $f$ is an integer additive set-indexer on a graph $G$, then $max(|f(u)|,\\|f(v)|) \le f^+(uv) \le |f(u)||f(v)|, \forall ~ u,v\in  V(G)$.
\end{lemma}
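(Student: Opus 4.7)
The statement is a bound on the cardinality $|f^+(uv)|$ of the sumset $f(u)+f(v)$ (reading $f^+(uv)$ as its cardinality, consistently with how the quantity is used). Both inequalities are purely statements about finite sumsets of non-negative integers, so the plan is to extract the graph-theoretic wrapping and prove the two bounds for arbitrary finite sets $A,B\subseteq \mathbb{N}_0$, namely $\max(|A|,|B|) \le |A+B| \le |A|\cdot |B|$, and then apply this with $A=f(u)$, $B=f(v)$.

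For the upper bound I would argue that the map $\phi:A\times B\to A+B$ given by $\phi(a,b)=a+b$ is surjective by the very definition $A+B=\{a+b:a\in A,\, b\in B\}$. Hence $|A+B|\le |A\times B|=|A|\cdot|B|$, which is the desired inequality.

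For the lower bound I would fix an arbitrary element $b_0\in B$ (such an element exists since $f$ is injective into $\mathcal{P}(\mathbb{N}_0)$ and $G$ has no isolated vertices, so $f(v)$ is non-empty; if needed one can handle the degenerate empty-set case separately). The translation map $a\mapsto a+b_0$ from $A$ to $A+\{b_0\}$ is a bijection (translation in $\mathbb{Z}$ is injective), so $|A+\{b_0\}|=|A|$. Since $A+\{b_0\}\subseteq A+B$, this gives $|A+B|\ge |A|$. Swapping the roles of $A$ and $B$ yields $|A+B|\ge |B|$, and combining the two gives $|A+B|\ge \max(|A|,|B|)$.

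There is really no hard step here; the only subtlety is the tacit assumption that $f(u)$ and $f(v)$ are non-empty, which I would mention briefly because an IASI is defined as an injection into $\mathcal{P}(\mathbb{N}_0)$ that in principle could assign the empty set to one vertex. I would point out that if either $f(u)$ or $f(v)$ is empty then $f(u)+f(v)=\emptyset$ and the inequality holds trivially, and that in all subsequent work the sets attached to vertices of edges are non-empty, so the bounds are used in their substantive form. After that observation the proof is immediate from the two sumset inequalities above.
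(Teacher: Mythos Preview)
Your argument is correct: the surjection $A\times B\to A+B$ gives the upper bound, and the injective translate $A+\{b_0\}\subseteq A+B$ gives the lower bound, with the empty-set case handled. Note that the paper does not actually prove this lemma; it is quoted from \cite{GS1} without proof, so there is no in-paper argument to compare against, and your proof is the standard one any reader would supply.
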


\begin{definition}{\rm
\cite{GS1} An IASI $f$ is said to be a {\em weak IASI} if $|f^+(uv)|=max(|f(u)|,\\|f(v)|)$ for all $u,v\in V(G)$. A weak IASI $f$ is said to be {\em weakly uniform IASI} if $|f^+(uv)|=k$, for all $u,v\in V(G)$ and for some positive integer $k$.  A graph which admits a weak IASI may be called a {\em weak IASI graph}.}
\end{definition}

It is to be noted that if $G$ is a weak IASI graph, then every edge of $G$ has at least one mono-indexed end vertex(or, equivalently no two adjacent vertices can have non-singleton set-labels simultaneously).

\begin{definition}{\rm
\cite{GS3} The cardinality of the set-label of an element (vertex or edge) of a graph $G$ is called the {\em set-indexing number} of that element. An element (a vertex or an edge) of a graph which has the set-indexing number $1$ is called a {\em mono-indexed element} of that graph.}
\end{definition}

\begin{definition}{\rm
\cite{GS3} The {\em sparing number} of a graph $G$ is defined to be the minimum number of mono-indexed edges required for $G$ to admit a weak IASI and is denoted by $\varphi(G)$.}
\end{definition}

\begin{theorem}\label{T-WSG}
\cite{GS3} A subgraph of a weak IASI graph is also a weak IASI graph.
\end{theorem}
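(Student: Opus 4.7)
The plan is straightforward: show that restricting a weak IASI of $G$ to a subgraph $H$ yields a weak IASI of $H$. Since the weak IASI property is an edge-local condition together with two injectivity requirements, passing to a subgraph can only decrease the domains of the functions involved, so every required property should be inherited automatically.

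Concretely, suppose $G$ admits a weak IASI $f:V(G)\to \mathcal{P}(\mathbb{N}_0)$, and let $H$ be a subgraph of $G$. Define $g:V(H)\to \mathcal{P}(\mathbb{N}_0)$ by $g=f|_{V(H)}$. First I would verify that $g$ is injective: this is immediate from $V(H)\subseteq V(G)$ and injectivity of $f$. Next I would consider the induced edge function $g^+$ on $E(H)$; since $E(H)\subseteq E(G)$ and $g^+(uv)=f^+(uv)$ for every $uv\in E(H)$, injectivity of $g^+$ follows from that of $f^+$.

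Finally I would check the weak IASI condition. For every $uv\in E(H)\subseteq E(G)$, the hypothesis on $f$ gives $|f^+(uv)|=\max(|f(u)|,|f(v)|)$; since $g$ agrees with $f$ on $V(H)$, this yields $|g^+(uv)|=\max(|g(u)|,|g(v)|)$. Therefore $g$ is a weak IASI on $H$.

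There is no substantive obstacle here; the only point one must be mindful of is that the definition of a subgraph permits both vertex-deletion and edge-deletion, so the verification should be phrased for an arbitrary subgraph rather than only a spanning or induced subgraph. Once that is acknowledged, the argument is essentially the observation that the weak IASI property is preserved under restriction.
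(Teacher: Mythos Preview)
Your argument is correct: restricting a weak IASI $f$ on $G$ to $V(H)$ gives an injective vertex labeling, the induced edge map is the restriction of $f^+$ and hence injective, and the edge-local condition $|f^+(uv)|=\max(|f(u)|,|f(v)|)$ passes to every edge of $H$. There is nothing to compare against here, since the paper does not supply its own proof of this theorem but merely cites it from \cite{GS3}; your restriction argument is exactly the standard one.
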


\begin{theorem}\label{T-WUC}
\cite{GS3} A graph $G$ admits a weak IASI if and only if $G$ is bipartite or it has at least one mono-indexed edge. 
\end{theorem}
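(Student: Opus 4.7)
The plan is to read the biconditional as the natural one: a weak IASI on $G$ contains no mono-indexed edge precisely when $G$ is bipartite (in which case a weak IASI of course exists). Equivalently, a non-bipartite weak IASI graph must contain at least one mono-indexed edge. So I will prove the two directions separately.

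For the necessity direction, I will argue by contrapositive: assume $G$ is non-bipartite and carries a weak IASI $f$, and show some edge of $G$ is mono-indexed. By assumption $G$ contains an odd cycle $C_{2k+1}$. The crucial input is the observation in the remark preceding the theorem: because $f$ is weak, no edge can have both endpoints non-singleton, so the set $N$ of non-mono-indexed vertices is independent in $G$, and in particular in the cycle $C_{2k+1}$. Since an odd cycle on $2k+1$ vertices has independence number $k$, at most $k$ vertices of the cycle lie in $N$, so at least $k+1$ vertices of the cycle are mono-indexed. A pigeonhole argument on the cyclic order then forces two consecutive vertices of $C_{2k+1}$ to be mono-indexed, and the edge between them is a mono-indexed edge of $G$, as required.

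For the sufficiency direction I will give an explicit construction. If $G$ contains (or is allowed to contain) a mono-indexed edge, one can simply label every vertex of $G$ with a distinct singleton from $\mathbb{N}_0$, chosen so that the pairwise sums are distinct (for instance $f(v_i)=\{2^i\}$); this is clearly a weak IASI, so only the bipartite case is genuinely interesting. Assume then that $G$ is bipartite with bipartition $(X,Y)$. I will set $f(x)=\{a_x\}$ for the vertices $x\in X$, with the integers $a_x$ distinct, and $f(y)=\{0,b_y\}$ for the vertices $y\in Y$, with the positive integers $b_y$ distinct and chosen large enough (say $a_x<b_y$ for all $x,y$, with $b_y$'s spaced geometrically) so that all the sumsets $f(x)+f(y)=\{a_x,\,a_x+b_y\}$ are pairwise distinct. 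Then $|f^+(xy)|=2=\max(|f(x)|,|f(y)|)$, so $f$ is a weak IASI with no mono-indexed edge.

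The main technical step is the pigeonhole/independent-set argument on the odd cycle, which is where the hypothesis ``non-bipartite'' gets converted into the existence of a mono-indexed edge; everything else is either the preceding remark applied routinely or a straightforward explicit labeling. The only place where I have to be slightly careful is in choosing the integers in the bipartite construction so that both $f$ is injective on $V(G)$ and $f^+$ is injective on $E(G)$, but this is a standard sumset trick using geometrically spaced values.
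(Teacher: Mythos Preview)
The paper does not actually prove this theorem: it is quoted from \cite{GS3} and stated without proof, so there is no in-paper argument to compare against. Your proposal is correct and is essentially the canonical argument one would expect in \cite{GS3}: the key observation (already recorded in the paper just before the theorem) that in a weak IASI the non-singleton vertices form an independent set, combined with the independence number of an odd cycle, forces a mono-indexed edge in any non-bipartite graph; conversely an explicit $2$-coloured labeling handles the bipartite case, and an all-singleton labeling handles the remaining case. Your care about injectivity of $f$ and $f^+$ (via $\{2^i\}$ and geometrically spaced $b_y$) is appropriate and suffices; the only cosmetic remark is that once you observe $a_x=\min f^+(xy)$ determines $x$ and then $b_y$ determines $y$, the ``geometric spacing'' is not really needed---distinct $a_x$ and distinct $b_y$ with $\max_x a_x<\min_y b_y$ already give injectivity of $f^+$.
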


\begin{theorem}\label{T-WUOC}
\cite{GS3} An odd cycle $C_n$ has a weak IASI if and only if it has at least one mono-indexed edge. 
\end{theorem}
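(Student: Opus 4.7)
The plan is to derive Theorem \ref{T-WUOC} as a direct specialization of Theorem \ref{T-WUC}, leveraging the fact that every odd cycle fails to be bipartite.

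For the forward direction, I would suppose that $C_n$ with $n$ odd admits a weak IASI. By Theorem \ref{T-WUC} the graph must either be bipartite or contain at least one mono-indexed edge. Since $C_n$ contains an odd closed walk, it is not bipartite, so the second alternative must hold; that is, $C_n$ has at least one mono-indexed edge.

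For the reverse direction, one can simply invoke Theorem \ref{T-WUC} again, but a more illuminating approach is to exhibit an explicit weak IASI on $C_n$ with a single mono-indexed edge. Writing $C_n=v_1v_2\cdots v_nv_1$, I would assign distinct singleton subsets of $\mathbb{N}_0$ to the odd-indexed vertices $v_1,v_3,\ldots,v_n$ (note that both endpoints of the closing edge are odd-indexed since $n$ is odd) and distinct non-singleton subsets of $\mathbb{N}_0$ to the even-indexed vertices $v_2,v_4,\ldots,v_{n-1}$. Every edge $v_iv_{i+1}$ with $i<n$ then has at least one singleton endpoint and so automatically satisfies $|f^+(v_iv_{i+1})|=\max(|f(v_i)|,|f(v_{i+1})|)$, while the closing edge $v_nv_1$ joins two singletons and is the sole mono-indexed edge of the cycle.

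The only item requiring care in this construction is ensuring the injectivity of $f$ and $f^+$; this is not a real obstacle but must be mentioned. By choosing the singleton values at sufficiently widely separated integers and picking the non-singleton labels to have small diameter, one forces all sumsets arising on distinct edges to be pairwise distinct, so the induced map $f^+$ is injective. This completes the construction and hence establishes the equivalence.
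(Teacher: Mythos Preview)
The paper does not supply its own proof of Theorem~\ref{T-WUOC}; this result is merely quoted from \cite{GS3} as a preliminary fact, so there is nothing in the present paper to compare your argument against. That said, your derivation is correct and is the natural one: Theorem~\ref{T-WUOC} is indeed an immediate specialization of Theorem~\ref{T-WUC} once one uses that an odd cycle is not bipartite, and your explicit alternating singleton/non-singleton labeling (with the single mono-indexed closing edge) is exactly the construction one would expect and matches the labeling philosophy used throughout the paper's own proofs. Your remark about arranging injectivity of $f$ and $f^{+}$ by spacing the integers is the standard device and suffices.
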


\begin{theorem}\label{T-NME}
\cite{GS3} Let $C_n$ be a cycle of length $n$ which admits a weak IASI, for a positive integer $n$. Then, $C_n$ has an odd number of mono-indexed edges when it is an odd cycle and has even number of mono-indexed edges, when it is an even cycle. 
\end{theorem}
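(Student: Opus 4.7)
The plan is to reduce the question to a parity count on vertex labellings. Since $f$ is a weak IASI, the edge cardinality $|f^+(uv)|=\max(|f(u)|,|f(v)|)$ equals $1$ if and only if both $|f(u)|=1$ and $|f(v)|=1$. Hence an edge of $C_n$ is mono-indexed if and only if both its endpoints are mono-indexed vertices. Let $M$ denote the set of mono-indexed vertices and $N=V(C_n)\setminus M$. By the remark immediately following the definition of weak IASI, every edge must have at least one mono-indexed endpoint; equivalently, $N$ is an independent set in $C_n$.

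Next I would count the non-mono-indexed edges. An edge is non-mono-indexed iff at least one of its endpoints lies in $N$. Because $N$ is independent, every such edge has exactly one endpoint in $N$, so no double-counting occurs when summing over $N$. Each vertex of $N$ has degree $2$ in $C_n$, therefore the number of non-mono-indexed edges is exactly $2|N|$.

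Subtracting from the total number of edges, the number of mono-indexed edges of $C_n$ equals $n-2|N|$, which has the same parity as $n$. Consequently, if $n$ is odd then the number of mono-indexed edges is odd (and in particular nonzero, consistent with Theorem~\ref{T-WUOC}), while if $n$ is even then the number of mono-indexed edges is even. The degenerate case $|N|=0$ (every vertex mono-indexed, every edge mono-indexed) fits the same formula and so poses no special difficulty. There is no real obstacle in this argument; the only point one must be careful about is the observation that non-mono-indexed edges are in bijection with incidences from $N$, which relies crucially on the independence of $N$ guaranteed by the weak IASI hypothesis.
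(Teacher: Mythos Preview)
Your argument is correct: the key observation that the set $N$ of non-mono-indexed vertices is independent (forced by the weak IASI condition) lets you count non-mono-indexed edges as exactly $2|N|$, whence the number of mono-indexed edges is $n-2|N|\equiv n\pmod 2$. Note, however, that the present paper does not supply its own proof of this theorem; it is merely quoted from \cite{GS3} as background, so there is no in-paper argument to compare against. Your parity count via the independence of $N$ is a clean, self-contained proof and stands on its own.
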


\begin{theorem}\label{T-WKN}
\cite{GS3} The sparing number of complete graph $K_n$ is $\frac{1}{2}(n-1)(n-2)$.
\end{theorem}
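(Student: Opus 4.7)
The plan is to exploit the structural observation recorded just after the definition of a weak IASI in the excerpt: in any weak IASI graph, every edge must have at least one mono-indexed endpoint, so the set of non-mono-indexed vertices is independent. Applying this to $K_n$, whose independence number is $1$, forces at most one vertex of $K_n$ to receive a non-singleton set-label; equivalently, at least $n-1$ of the vertices are mono-indexed. Any edge joining two mono-indexed vertices $u,v$ satisfies $|f^+(uv)|=\max(|f(u)|,|f(v)|)=1$ and is therefore itself mono-indexed, so the $n-1$ mono-indexed vertices contribute at least $\binom{n-1}{2}=\frac{1}{2}(n-1)(n-2)$ mono-indexed edges. This yields the lower bound $\varphi(K_n)\ge \frac{1}{2}(n-1)(n-2)$.

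For the matching upper bound I would exhibit an explicit weak IASI realising equality. Select one vertex $v_0$ and set $f(v_0)=\{0,1\}$; label the remaining vertices $v_1,\ldots,v_{n-1}$ with singletons $\{a_i\}$, where the $a_i\ge 2$ are chosen lacunarily enough to keep $f^+$ injective (a geometric choice such as $a_i=3^i$ makes all pairwise sums $a_i+a_j$ distinct and avoids collisions with the sumsets $\{a_i,a_i+1\}$ coming from the edges at $v_0$). The weak condition $|f^+(uv)|=\max(|f(u)|,|f(v)|)$ holds automatically on every edge: singleton-singleton edges produce singleton sumsets, and each edge incident to $v_0$ has sumset $\{a_i,a_i+1\}$ of size $2$. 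Exactly the $\binom{n-1}{2}$ edges inside $\{v_1,\ldots,v_{n-1}\}$ are mono-indexed, so this labelling is optimal and $\varphi(K_n)\le \frac{1}{2}(n-1)(n-2)$.

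The only genuinely delicate point is verifying the injectivity of the induced edge map $f^+$ in the construction; the structural lower bound is essentially immediate from Theorem~\ref{T-WUC} together with the independent-set observation. A standard Sidon-type or geometric-progression choice of the singletons $\{a_i\}$ resolves the injectivity issue without incident, so the theorem reduces cleanly to the independence-number computation $\alpha(K_n)=1$.
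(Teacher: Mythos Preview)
The paper does not actually prove Theorem~\ref{T-WKN}; it is quoted from \cite{GS3} as a known result and used later (e.g.\ in the proofs of Theorems~\ref{T-NME2K} and~\ref{T-KnxPm}) without any argument being reproduced here. So there is no in-paper proof to compare against.

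That said, your argument is correct and is exactly the natural one the paper is implicitly relying on when it asserts, in the proof of Theorem~\ref{T-NME2K}, that ``each copy of $K_m$ has at most one vertex that is not mono-indexed.'' Your lower bound is the independence-number observation (the non--mono-indexed vertices form an independent set, and $\alpha(K_n)=1$), and your upper bound via the explicit labelling $f(v_0)=\{0,1\}$, $f(v_i)=\{3^i\}$ is clean: the vertex map is visibly injective, the edge map $f^+$ separates the two-element sumsets $\{3^i,3^i+1\}$ from one another and from the singleton sumsets $\{3^i+3^j\}$, and distinctness of the latter follows from uniqueness of ternary representations. Nothing is missing.
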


In this paper, we discuss about the sparing number of the cartesian products of two weak IASI graphs.

\section{Main Results}

Let $G_1(V_1,E_1)$ and $G_2(V_2,E_2)$ be two graphs.Then, the {\em cartesian product} of $G_1$ and $G_2$, denoted by $G_1\times G_2$, is the graph with vertex set $V_1\times V_2$  defined as follows. Let $u=(u_1, u_2)$ and $v=(v_1,v_2)$ be two points in $V_1\times V_2$. Then, $u$ and $v$ are adjacent in $G_1\times G_2$ whenever [$u_1=v_1$ and $u_2$ is adjacent to $v_2$] or [$u_2=v_2$ and $u_1$ is adjacent to $v_1$]. If $|V_i|=p_i$ and $|E_i|=q_i$ for $i=1,2$, then $|V(G_1\times G_2)|=p_1p_2$ and $|E(G_1\times G_2)|=p_1q_2+p_2q_1$.

\begin{remark}\label{R-CP2G}{\rm
The cartesian product $G_1\times G_2$ may be viewed as follows. Make $p_2$ copies of $G_1$. Denote these copies by $G_{1_i}$, which corresponds to the vertex $v_i$ of $G_2$. Now, join the corresponding vertices of two copies $G_{1_i}$ and $G_{1_j}$ if the corresponding vertices $v_i$ and $v_j$ are adjacent in $G_2$. Thus, we view the product $G_1\times G_2$ as a union of $p_2$ copies of $G_1$ and a finite number of edges connecting two copies $G_{1_i}$ and $G_{1_j}$ of $G_1$ according to the adjacency of the corresponding vertices $v_i$ and $v_j$ in $G_2$, where $1\le i\neq j\le p_2$.}
\end{remark}

\begin{remark}\label{R-PBP}{\rm
The cartesian product of two bipartite graphs is also a bipartite graph. Also, the cartesian products $G_1\times G_2$ and $G_2\times G_1$ of two graphs $G_1$ and $G_2$, are isomorphic graphs.}
\end{remark}

\begin{theorem}
\cite{GS7} Let $G_1$ and $G_2$ be two weak IASI graphs. Then, the product $G_1\times G_2$ also admits a weak IASI. 
\end{theorem}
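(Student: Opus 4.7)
The plan is to construct a weak IASI $f$ on $G_1 \times G_2$ directly from the given weak IASIs $f_1, f_2$ on $G_1, G_2$. The guiding observation is the remark recorded just after the definition of weak IASI: in any weak IASI graph, every edge has at least one mono-indexed endpoint, so the non-mono-indexed vertices form an independent set. Using this, I would first identify the correct ``heavy'' candidate set in the product, namely
\[
H=\{(u_i,v_j)\in V(G_1\times G_2): |f_1(u_i)|\ge 2 \text{ and } |f_2(v_j)|\ge 2\},
\]
and check that $H$ is independent in $G_1\times G_2$. This is a short case check on the two edge types: a horizontal edge inside $H$ would give an edge of $G_1$ between two non-mono-indexed vertices of $f_1$, contradicting that $f_1$ is a weak IASI, and the vertical case is symmetric.

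Next, I would define $f$ by assigning each vertex of $H$ a distinct non-singleton set-label (for instance a translate of $f_1(u_i)$ by a large shift depending on the pair $(i,j)$) and each vertex outside $H$ a distinct singleton, with all integers drawn from numerical ranges that are disjoint by a large multiplicative base $N$. Because $H$ is independent, every edge $e=uv$ of $G_1\times G_2$ has at least one endpoint outside $H$, hence mono-indexed, and therefore $f^+(e)$ is just a translate of the other endpoint's set-label. This gives $|f^+(e)|=\max(|f(u)|,|f(v)|)$ for free, so the weak IASI condition is verified without further calculation.

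The main technical obstacle is the injectivity of the induced map $f^+$, since a priori two distinct edges of the product could produce the same sumset; the fix is to choose the singleton labels and the shifts for vertices of $H$ inside widely separated arithmetic ranges so that the minimum element of each sumset $f(u)+f(v)$ uniquely encodes the unordered pair $\{u,v\}$. Once this bookkeeping is in place, the injectivity of $f$ itself is automatic from the distinctness of the chosen labels, the weak IASI condition has already been secured in the previous step, and the proof is complete.
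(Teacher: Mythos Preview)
The paper does not give its own proof of this statement; it is quoted from \cite{GS7} and stated without argument, so there is nothing in the present paper to compare your proposal against directly.

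On its merits, your argument is correct. The crucial step---that $H=\{(u,v):|f_1(u)|\ge 2,\ |f_2(v)|\ge 2\}$ is independent in $G_1\times G_2$---is exactly right: any edge of the product fixes one coordinate, and an edge inside $H$ would project to an edge of $G_1$ or $G_2$ with both endpoints non-mono-indexed, contradicting the weak IASI hypothesis on that factor. Once $H$ is independent, giving non-singleton labels on $H$ and singletons elsewhere forces every edge to have a mono-indexed endpoint, so $|f^+(uv)|=\max(|f(u)|,|f(v)|)$ automatically. The injectivity bookkeeping you sketch (separating labels by a large base $N$ so that $\min f^+(e)$ encodes $e$) is standard and can be carried out; one concrete realisation is to take the singleton values from a Sidon set and shift each non-singleton label by a multiple of $N$ larger than all singleton values and set diameters involved.

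One remark worth making explicit: your construction uses $f_1,f_2$ only to locate an independent set in the product, and in fact the bare statement of the theorem is almost trivial, since assigning distinct singletons from a Sidon set to \emph{all} vertices already gives a weak IASI on any graph. The substance of the paper lies in controlling the sparing number, where the choice of which vertices receive non-singleton labels matters; your $H$ is a natural candidate but need not be optimal for that purpose.
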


\begin{theorem}\label{P-WPr-PP1}
The sparing number of a planar grid $P_m\times P_n$ is $0$.
\end{theorem}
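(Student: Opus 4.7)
The plan is to exploit bipartiteness. Since $P_m$ and $P_n$ are both bipartite, Remark \ref{R-PBP} gives that $P_m \times P_n$ is bipartite as well, with a natural bipartition $(X,Y)$ where $X$ and $Y$ are the two color classes under the standard checkerboard $2$-coloring of the grid.

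The key observation is that for a weak IASI, an edge $uv$ is mono-indexed iff $\max(|f(u)|,|f(v)|)=1$, i.e., iff \emph{both} endpoints are mono-indexed vertices. Therefore, to force the sparing number down to $0$, it suffices to exhibit a weak IASI in which no edge has two mono-indexed endpoints. This can be achieved by placing all the mono-indexed (singleton-labeled) vertices in one part of the bipartition and all the non-singleton-labeled vertices in the other part.

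Concretely, I would label each vertex $u\in X$ by a distinct singleton $\{a_u\}\subset \mathbb{N}_0$, and each vertex $v\in Y$ by a distinct $2$-element set $\{0,b_v\}$, where the integers $a_u$ and $b_v$ are chosen from sufficiently separated scales (for instance, $a_u$ drawn from $\{1,2,\dots,|X|\}$ and $b_v$ chosen as distinct powers of $2$ larger than $2|X|$) so that $f$ is injective and all edge sumsets $\{a_u,\,a_u+b_v\}$ are pairwise distinct, making $f^+$ injective. For every edge $uv$ of $P_m\times P_n$ we then have $|f^+(uv)|=|\{a_u\}+\{0,b_v\}|=2=\max(|f(u)|,|f(v)|)$, so $f$ is a weak IASI and no edge is mono-indexed.

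The main thing to be careful about is the injectivity of $f^+$; the weak IASI condition itself comes for free because the sumset of a singleton with any set $B$ has exactly $|B|$ elements. With the labels chosen on separated scales, injectivity of $f^+$ is a routine check (no obstacle arises from the grid structure), so the argument reduces entirely to the bipartite coloring idea. This gives $\varphi(P_m\times P_n)=0$.
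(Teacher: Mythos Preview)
Your proof is correct and follows essentially the same approach as the paper: both exploit the checkerboard $2$-coloring of the grid, assigning singleton labels to one color class and non-singleton labels to the other so that no edge is mono-indexed. The paper describes this labeling concretely copy-by-copy (alternating the starting type on successive copies of $P_m$), whereas you invoke Remark~\ref{R-PBP} to get the bipartition directly and are more explicit about choosing labels on separated scales to guarantee injectivity of $f^+$; the underlying idea is the same.
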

\begin{proof}
Let $P_m$ and $P_n$ be two paths which admit weak IASIs . Label the vertices of $P_{m_i}, 1 \le i \le n$, as follows. For odd values of $i$, label the vertices of $P_{m_i}$, starting from the initial vertex, alternately by distinct singleton sets and distinct non-singleton sets respectively and for even values of $i$, label the vertices of $P_{m_i}$, starting from the initial vertex, alternately by non-singleton sets and singleton sets that are not used for labeling any vertex before.  This labeling is a weak IASI for $P_m\times P_n$.

In $P_m\times P_n$, the corresponding vertices of different copies of $P_m$ are adjacent. Hence, if we label as mentioned above, no two edge of $P_m\times P_n$ have the set-label of the same kind. Therefore, the sparing number of a planar grid is $0$.
\end{proof}

Now, the following theorem estimates the sparing number of a {\em prism}, the cartesian product of a cycle and a path.

\begin{proposition}
The sparing number of a prism $C_m\times P_n$ is 
\begin{equation*}
\varphi(C_m\times P_n)=
\begin{cases}
0 & ~~ \text{if $m$ is even}\\
2n+1 & ~~ \text{if $m$ is odd}.
\end{cases}
\end{equation*}
\end{proposition}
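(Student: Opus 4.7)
The plan is to split by the parity of $m$. The case $m$ even is immediate: both $C_m$ and $P_n$ are bipartite, so by Remark~\ref{R-PBP} the product $C_m\times P_n$ is bipartite, and labeling one bipartition class with distinct non-singleton sets and the other with distinct singleton sets gives a weak IASI with no mono-indexed edges, hence $\varphi(C_m\times P_n)=0$.

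For $m$ odd, the plan is to match an explicit construction against a structural lower bound, both giving $2n+1$. For the upper bound, using Remark~\ref{R-CP2G}, view $C_m\times P_n$ as copies of $C_m$ joined by vertical matching edges between consecutive copies. Since $m$ is odd, $C_m$ admits two disjoint maximum independent sets $A,B$ of size $(m-1)/2$ -- for instance the odd-position and even-position vertices of $C_m$ -- whose union omits exactly one vertex $v^\ast$. On odd-indexed copies, put distinct non-singleton sets on $A$ and distinct singletons on $V(C_m)\setminus A$; on even-indexed copies use $B$ in place of $A$. Choosing the singletons as distinct integers and the non-singleton sets with pairwise distinct cardinalities forces $f$ and $f^+$ to be injective and makes $|f^+(uv)|=\max(|f(u)|,|f(v)|)$ on every edge, so the labeling is a weak IASI. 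Inside each copy of $C_m$ exactly one edge is mono-indexed (the unique edge both of whose endpoints lie outside the chosen max independent set, necessarily incident to $v^\ast$); between consecutive copies exactly one vertical edge, at position $v^\ast$, is mono-indexed, since $A\cup B$ misses only $v^\ast$. Summing over all copies and matching layers yields $2n+1$ mono-indexed edges.

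For the lower bound, in any weak IASI of $G=C_m\times P_n$ the set $N$ of non-singleton vertices is independent, so the number of mono-indexed edges equals $|E(G)|-\sum_{v\in N}\deg_G(v)$. Each copy of $C_m$ contributes at most $\alpha(C_m)=(m-1)/2$ vertices to $N$; vertices in the two end copies have degree $3$ in $G$ and those in the internal copies have degree $4$, so a short count gives $\sum_{v\in N}\deg_G(v)\le (m-1)(2n+1)$. Since $|E(G)|=m(2n+1)$, this forces at least $2n+1$ mono-indexed edges, matching the construction.

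The main obstacle is the precise edge count in the upper-bound construction: one must verify that inside each copy exactly one $C_m$-edge is mono-indexed (never zero, never two) and that each matching layer contributes exactly one mono-indexed edge. This is where the choice of \emph{two disjoint} maximum independent sets of $C_m$ whose union omits a single vertex is essential -- it makes the construction match the structural lower bound term by term.
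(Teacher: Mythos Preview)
Your proof is correct and in fact more complete than the paper's. For even $m$ the paper gives the same alternating labeling you describe; for odd $m$ it sketches essentially the same construction (alternate the pattern on consecutive copies so that each copy carries one mono-indexed cycle edge and each layer one mono-indexed vertical edge), reaching the count $(n+1)+n=2n+1$. The paper, however, provides \emph{no} lower-bound argument: it simply exhibits the labeling and asserts optimality. Your degree-sum inequality
\[
\varphi(G)\;=\;|E(G)|-\sum_{v\in N}\deg_G(v)\;\ge\;m(2n+1)-(m-1)(2n+1)\;=\;2n+1,
\]
obtained from $|N\cap V(C_{m_i})|\le\alpha(C_m)=(m-1)/2$ together with the layered degree structure, supplies exactly the missing half and is what genuinely pins down $\varphi$. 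Your explicit use of two disjoint maximum independent sets $A,B$ of $C_m$ with $A\cup B=V(C_m)\setminus\{v^\ast\}$ also makes the upper-bound count transparent, whereas the paper leaves the reader to infer why only one vertical edge per layer is mono-indexed. One small point worth stating explicitly: for $f^+$ to be injective on the mono-indexed edges you need the singleton labels chosen so that the relevant pairwise sums are distinct (e.g.\ take them from a Sidon set or use rapidly growing integers); this is routine but not automatic from ``distinct singletons'' alone.
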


\begin{proof}
Since $P_n$ has $n+1$ vertices, there are $n+1$ copies of $C_m$ in $C_m\times P_n$. Now, we consider the following cases.

\noindent {\em Case 1:} Suppose that $m$ is even. Label the vertices of each copy $C_{m_i}$ of $C_m$, starting from the initial vertex, by distinct singleton sets and distinct non-singleton sets alternately for odd number $i$ and label the vertices of $C_{m_i}$, starting from the initial vertex, by distinct non-singleton sets and distinct singleton sets alternately for even number $i$. Then, for every pair of adjacent vertices in $C_m\times P_n$, one will be mono-indexed and the other have non-singleton set-label. Therefore, $\varphi(C_m\times P_n) = 0$.

\noindent {\em Case 2:} Let $m$ be an odd integer. Then, by Theorem \ref{T-WUOC}, $C_m$ has at least one mono-indexed edge. That is, at least two adjacent vertices in each copy of $C_m$ will be mono-indexed.  Then, every copy $C_{m_i}$ of $C_m$ must contain at least one mono-indexed edge. Therefore, if we label the vertices of each $C_{m_i}$ alternately by distinct singleton sets and distinct non-singleton sets, there will be two adjacent vertices in each $C_{m_i}$ are mono-indexed. Label the vertices of each copy, in such a way that the corresponding edges of neighbouring copies $C_{m_i}$ must not be mono-indexed. Then, there will be one mono-indexed edge between $C_{m_i}$ and $C_{m_{i+1}}$ for all $i<n$. Therefore, there are $n+1$ mono-indexed edges, one in each copy $C_{m_i}$ and $n$ mono-indexed edges, connecting $C_{m_i}$ and $C_{m_{i+1}}$. Therefore, $\varphi(C_m\times P_n) = 2n+1$    
\end{proof}

The following theorem discusses the sparing number of the cartesian product $C_m\times C_n$ of two cycles $C_m$ and $C_n$.

\begin{theorem}\label{T-NWFN}
Let $C_m$ and $C_n$ be two cycles. Then, the sparing number of the product $C_m\times C_n$ is 
\[ \varphi(C_m\times C_n) = 
  \begin{cases}
    $0$ & \quad \text{if both $m$ and $n$ are even}\\
    $2n$ & \quad \text{if $m$ is odd and $n$ is even}\\
    $2l$ & \quad \text{otherwise}~~ l=max\, (m,n).
  \end{cases} \]
\end{theorem}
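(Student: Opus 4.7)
The plan is to split into three cases by parities and, following Remark \ref{R-CP2G}, regard $C_m\times C_n$ as $n$ horizontal copies $C_{m_i}$ of $C_m$ (one per vertex $v_i$ of $C_n$) joined through $m$ vertical copies of $C_n$. Throughout I exploit the observation (stated in the paper just after the definition of weak IASI) that in any weak IASI the set $S$ of non-singleton-labeled vertices is independent; equivalently, a mono-indexed edge is precisely an edge whose both endpoints are singleton-labeled. Since $C_m\times C_n$ is $4$-regular with $2mn$ edges, counting edges incident to $S$ gives that the number of mono-indexed edges is $2mn-4|S|$, so the sparing number equals $2mn-4\alpha(C_m\times C_n)$ where $\alpha$ denotes the independence number.

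If $m$ and $n$ are both even, $C_m\times C_n$ is bipartite by Remark \ref{R-PBP}, and one color class furnishes an independent set of size $mn/2$, giving $\varphi=0$. Explicitly, label each $C_{m_i}$ alternately by distinct singletons and non-singletons, flipping the starting parity as $i$ runs through $V(C_n)$; both alternations close up by the evenness of $m$ and $n$, and no two adjacent vertices carry non-singleton labels simultaneously. This mirrors the construction of Theorem \ref{P-WPr-PP1}.

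If $m$ is odd and $n$ is even, the projection of $S$ to each copy $C_{m_i}$ is independent in the odd cycle $C_m$, so $|S\cap V(C_{m_i})|\le(m-1)/2$; summing over $n$ copies gives $|S|\le n(m-1)/2$ and hence $\varphi\ge 2mn-2n(m-1)=2n$. To realize this, fix two disjoint maximum independent sets $A=\{u_1,u_3,\ldots,u_{m-2}\}$ and $B=\{u_2,u_4,\ldots,u_{m-1}\}$ of $C_m$, leaving the single vertex $u_m$ outside $A\cup B$. For odd $i$, label the vertices of $A\times\{v_i\}$ by distinct non-singletons and the rest of $C_{m_i}$ by distinct singletons; for even $i$, swap the roles of $A$ and $B$. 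The swap is cyclically consistent because $n$ is even. A direct check shows that each $C_{m_i}$ has exactly one mono-indexed horizontal edge and, since $u_m$ is singleton in every copy, each consecutive pair $(v_i,v_{i+1})$ contributes exactly one mono-indexed vertical edge at $u_m$, totalling $2n$.

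If $m$ and $n$ are both odd, take $m\le n$ without loss of generality, so $l=n$. The same projection bound gives $|S|\le n(m-1)/2$ and hence $\varphi\ge 2n=2l$. For the construction, seek a cyclic sequence $S_1,S_2,\ldots,S_n$ of maximum independent sets of $C_m$ with $S_i\cap S_{i+1}=\varnothing$ (indices mod $n$). The $m$ maximum independent sets of $C_m$ are the cyclic rotations of $\{u_1,u_3,\ldots,u_{m-2}\}$, and a direct calculation shows that two such rotations are disjoint iff they differ by $\pm 1$; hence the ``compatibility graph'' on maximum independent sets is itself a cycle of length $m$. A closed walk of length $n$ in this cycle exists since $n\ge m$ and $n-m$ is even (go once around in $m$ steps, then expend the remaining even number of steps going back and forth). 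Labeling $S_i\times\{v_i\}$ with distinct non-singletons and the remaining vertices with distinct singletons yields a weak IASI attaining exactly $2n=2l$ mono-indexed edges.

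The principal obstacle is the construction in the both-odd case: for odd $n$ the simple two-set alternation used when $n$ is even fails, and one must verify that a suitable cyclic family of $n$ maximum independent sets of $C_m$ with consecutive disjointness exists. This reduces, via the identification of the compatibility graph with the cycle $C_m$, to the elementary fact that an odd cycle supports a closed walk of any length $n\ge m$ with $n-m$ even. The lower bounds, by contrast, follow uniformly from projecting the independent set $S$ onto horizontal copies and applying the basic estimate $\alpha(C_m)=\lfloor m/2\rfloor$.
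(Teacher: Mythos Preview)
Your proof is correct and proceeds along a genuinely different line from the paper's argument. The paper works directly with explicit alternating labelings of the copies $C_{m_i}$ and tallies mono-indexed edges case by case; the lower bounds are implicit in the observation that each odd copy forces a mono-indexed edge and that this in turn forces a mono-indexed vertical edge between neighbouring copies. You instead convert the problem into an independence-number computation via the identity
\[
\varphi(C_m\times C_n)=2mn-4\,\alpha(C_m\times C_n),
\]
valid because $C_m\times C_n$ is $4$-regular and the non-singleton vertices must form an independent set. Projecting that set onto the horizontal copies yields the sharp upper bound $\alpha\le n\lfloor m/2\rfloor$, which immediately gives the lower bounds $\varphi\ge 2n$ in the two non-bipartite cases. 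Where your argument really adds something is the \emph{both-odd} construction: the paper's alternating scheme breaks down when $n$ is odd (the first and $n$-th copies would receive the same parity pattern), and the paper does not supply an alternative. Your reduction to finding a closed walk of length $n$ in the ``compatibility graph'' of maximum independent sets of $C_m$---which you correctly identify with $C_m$ itself---and the observation that such a walk exists because $n\ge m$ with $n-m$ even, furnishes an explicit optimal labeling. Thus your route is more conceptual (it isolates the general principle $\varphi(G)=|E(G)|-d\,\alpha(G)$ for $d$-regular $G$) and strictly more complete on the construction side, at the cost of the auxiliary lemma on disjoint maximum independent sets of odd cycles; the paper's route is more hands-on but leaves the odd--odd upper bound under-justified.
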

\begin{proof}
Let $C_{m_i}$ be the $i$-th copy of $C_m$ in $C_m\times C_n$. Label the vertices of $C_{m_i}$, for odd values of $i$, starting from the initial vertex, by distinct singleton sets and distinct non-singleton sets (that are not used for labeling vertices in any other copy of $C_m$), alternately and label the vertices of $C_{m_i}$, for odd values of $i$, starting from the initial vertex, by distinct non-singleton sets and distinct singleton sets (that are not used for labeling vertices in any other copy of $C_m$) alternately in such a way that no two adjacent vertices are labeled by non-singleton sets.
Now we have the following cases.

{\em Case-1:} If both $C_m$ and $C_n$ are even, then by Remark \ref{R-CP2G}, the product $C_m\times C_n$ is the union of even cycles and hence is bipartite. Hence, by Theorem \ref{T-WUC}, the sparing number of $C_m\times C_n$ is $0$.

\noindent {\em Case-2:} If $m$ and $n$ are not simultaneously even.

Here we have the following subcases.

\noindent  {\em Subcase-2.1:} Without loss of generality, let $C_m$ be an odd cycle and $C_n$ be an even cycle. Then each copy of $C_m$ must have at least one mono-indexed edge. That is, in each copy of $C_m$, at least two adjacent vertices are mono-indexed. Therefore, there exist at least one mono-indexed edge between two neighbouring copies $C_{m_i}$ and $C_{m_{i+1}}$, for all $i<n$. Therefore, the total number of mono-indexed edges in $C_m\times C_n$ is $2n$.

\noindent  {\em Subcase-2.2:} Without loss of generality, let $m\le n$. Let both $C_m$ and  $C_n$ be two odd cycles. Then, in $C_m\times C_n$, in each copy of $C_m$, at least two adjacent vertices are mono-indexed. Therefore, there exist at least one mono-indexed edge between two neighbouring copies $C_{m_i}$ and $C_{m_{i+1}}$, for all $i<n$. Therefore, the total number of mono-indexed edges in $C_m\times C_n$ is $2n$.

Now, let $ m\ge n$. Then, since $C_m\times C_n$ and $C_n\times C_m$ are isomorphic graphs, $C_m\times C_n$ can be considered as the graph consisting of $m$ copies of $C_n$ with the corresponding edges of consecutive two copies are joined by edges. Hence, as explained in the above paragraph, the total number of mono-indexed edges in $C_m\times C_n$ is $2m$. That is, the sparing number of $C_m\times C_n$ is $max(m,n)$, if $m$ and $n$ are odd.
\end{proof}

An interesting question in this context is about the sparing number of the cartesian product of two graphs, at least one of which is a complete graph. The following theorem estimates the sparing number of the cartesian product of two complete graphs. 

\begin{theorem}\label{T-NME2K}
The sparing number of the product $K_m\times K_n$ of two complete graphs $K_m$ and $K_n$ is 
\begin{equation*}
\varphi(K_m\times K_n)=
\begin{cases}
n\binom{m-1}{2}+m\binom{n-1}{2} & ~~ \text{if}~~ n<m\\
m(m-1)(m-2) & ~~ \text{if}~~ n=m\\
\frac{1}{2} m(n-2)(m+n-2) & ~~ \text{if} ~~ n>m.
\end{cases}
\end{equation*}
\end{theorem}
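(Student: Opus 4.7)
The plan is to view $K_m\times K_n$ as a rook-type graph on the $m\times n$ grid, so that $(k,i)$ and $(\ell,j)$ are adjacent iff they agree in exactly one coordinate. By Remark~\ref{R-CP2G} this decomposes into $n$ vertex-disjoint copies of $K_m$ (the ``rows'', one per vertex of $K_n$) together with $m$ vertex-disjoint copies of $K_n$ (the ``columns''); each edge lies in exactly one of these $m+n$ cliques, and every vertex has degree $(m-1)+(n-1)=m+n-2$.

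The key structural observation is the remark following Definition~\ref{D2}: in any weak IASI the set $S$ of non-mono-indexed vertices must be independent. In $K_m\times K_n$ two independent vertices necessarily differ in both coordinates, so $S$ projects injectively into each coordinate set, giving $|S|\le\min(m,n)$. Since $S$ is independent, the edges incident to $S$ are counted without overlap by the degree sum: exactly $|S|(m+n-2)$ edges may be non-mono-indexed. Consequently, the mono-edge count in any weak IASI satisfies
\[
\varphi(K_m\times K_n)\ \ge\ \Bigl(n\binom{m}{2}+m\binom{n}{2}\Bigr)-|S|(m+n-2),
\]
which is minimised by taking $|S|=\min(m,n)$.

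For the matching construction I would pick $S$ to be a maximum independent set, for instance the diagonal $\{(i,i):1\le i\le\min(m,n)\}$, assign pairwise-disjoint non-singleton sets to the vertices of $S$, and label the remaining $mn-\min(m,n)$ vertices by distinct singletons drawn from a Sidon-type sequence so that $f^+$ is injective on the whole edge set and the weak condition $|f^+(uv)|=\max(|f(u)|,|f(v)|)$ holds on every edge. A clique-by-clique tally then splits the mono-edge count: each row containing a vertex of $S$ contributes $\binom{m-1}{2}$ mono edges (Theorem~\ref{T-WKN}), each column containing a vertex of $S$ contributes $\binom{n-1}{2}$ mono edges, and every row or column disjoint from $S$ is fully mono-indexed. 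Summing according to the three cases $n<m$, $n=m$, $n>m$ and simplifying should produce the closed forms in the statement.

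The main obstacles I anticipate are (i) the Sidon-type injectivity verification --- making every edge sum distinct across the whole graph while keeping the weak condition intact --- which should succumb to a standard choice of rapidly-growing singletons together with non-singleton sets living in disjoint residue classes; and (ii) the algebraic consolidation of the per-clique totals into the particular expressions stated, where the apparent asymmetry between the $n<m$ and $n>m$ formulas (despite $K_m\times K_n\cong K_n\times K_m$) suggests that the case split will turn on careful bookkeeping of the $m-\min(m,n)$ cliques disjoint from $S$, and that the $n=m$ case will follow as the cleanest common specialisation.
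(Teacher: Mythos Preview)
Your approach is essentially the paper's own: both compute $|E(K_m\times K_n)|=\tfrac12 mn(m+n-2)$ and subtract the maximum possible number of non-mono-indexed edges, namely $|S|(m+n-2)$ where $S$ is the independent set of non-mono-indexed vertices. Your bound $|S|\le\min(m,n)$ via injective coordinate projections and your diagonal construction are exactly the content of the paper's argument, only stated more carefully.

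However, the asymmetry you flagged is a genuine error in the stated theorem, not a bookkeeping subtlety you should expect to resolve. Carrying your computation through gives
\[
\varphi(K_m\times K_n)=\tfrac12\,\min(m,n)\bigl(\max(m,n)-2\bigr)(m+n-2)
\]
in every case, which agrees with the paper's formulas for $n\ge m$ but \emph{not} for $n<m$. In the paper's Case~1 the number of non-mono-indexed edges between copies is taken as $m(n-1)$ where it should be $n(n-1)$, producing a formula that violates the isomorphism $K_m\times K_n\cong K_n\times K_m$: for instance the first line gives $\varphi(K_3\times K_2)=2\binom{2}{2}+3\binom{1}{2}=2$, whereas the third line applied to the isomorphic $K_2\times K_3$ gives the correct value $3$. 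So do not try to massage your clique-by-clique tally into the first displayed expression; your argument is sound and that expression is not.
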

\begin{proof}
Let $v_{ij}$ be the $i$-th vertex of the $j$-th copy of $K_m$ in $K_m\times K_n$. Then, $u_{ij}$ is adjacent to all other vertices in the same copy of $K_m$ and is adjacent to the corresponding vertices of all other copies of $K_m$ in $K_m\times K_n$. Therefore, the degree of $u_{ij}$ is $m+n-2$. That is, $K_m\times K_n$ is a $(m+n-2)$-regular graph. More over, the number of vertices in $K_m\times K_n$ is $mn$. Hence, the number of edges in $K_m\times K_n$ is $\frac{1}{2}mn(m+n-2)$. Here, we have the following cases. Also, each copy of $K_m$ has at most one vertex that is not mono-indexed.

\noindent {\em Case 1:} Let $n<m$. Then, each copy of $K_m$ has at most $(m-1)$ edges that are not mono-indexed. More over, $(n-1)$ edges that are not mono-indexed, are incident on one vertex of each copy of $K_m$. Therefore, the maximum number of edges that are not mono-indexed in $K_m\times K_n$ is $m(n-1)+n(m-1)$. Hence, the number of mono-indexed edges in $K_m\times K_n$ is
\begin{eqnarray*}
\varphi(K_m \times K_n) & = & \frac{1}{2}mn(m+n-2)-[m(n-1)+n(m-1)] \\
& = & \frac{1}{2}[m^2n+mn^2-6mn+2m+2n]\\
& = & \frac{1}{2} [m(n-1)(n-2)+n(m-1)(m-2)]\\
& = & n\binom{m-1}{2}+m\binom{n-1}{2}.
\end{eqnarray*}

\noindent {\em Case 2:} Let $n=m$. Then, by Case 1, $\varphi(K_m \times K_n)= 2m\binom{m-1}{2}=m(m-1)(m-2)$. 

\noindent {\em Case 3:} Let $n>m$. Then, $m$ copies of $K_m$ have one mono-indexed vertex each and the remaining $(n-m)$ copies must be $1$-uniform. Since the corresponding vertices of all copies of $K_m$ are adjacent to each other, no two corresponding vertices can have non-singleton set-labels. Therefore, the total number of edges that are not mono-indexed in $K_m\times K_n$ is $m(m+n-2)$. Therefore, the number of mono-indexed edges in $K_m\times K_n$ is
\begin{eqnarray*}
\varphi(K_m \times K_n) & = & \frac{1}{2}mn(m+n-2)-m(m+n-2) \\
& = & \frac{1}{2}[m^2n+mn^2-4mn-2m^2+4m]\\
& = & \frac{1}{2} m(n-2)(m+n-2).
\end{eqnarray*}
This completes the proof.
\end{proof}

We now proceed to determine the sparing number of the cartesian product of a complete graph and a path.

\begin{theorem}\label{T-KnxPm}
The sparing number of the cartesian product of a complete graph $K_n$ and a path $P_m$ is  $\frac{1}{2}(n-1)[(m+1)(n+1)-2]$.
\end{theorem}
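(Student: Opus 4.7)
The plan is to follow the counting strategy deployed in the proof of Theorem~\ref{T-NME2K}, combined with the structural description in Remark~\ref{R-CP2G}. First I would view $K_n\times P_m$ as the union of $m+1$ copies $K_{n_1},\dots,K_{n_{m+1}}$ of $K_n$ glued by $m$ families of ``rung'' edges, where the $j$-th family consists of the $n$ edges joining corresponding vertices of $K_{n_j}$ and $K_{n_{j+1}}$. A direct count gives
\[
|E(K_n\times P_m)|=(m+1)\binom{n}{2}+mn=\tfrac{1}{2}\,n\bigl[(m+1)(n+1)-2\bigr],
\]
which already isolates the factor $(m+1)(n+1)-2$ that must appear in the final answer.

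Next I would use the weak IASI condition to pin down the structure of any optimal labeling. Because $K_n$ is complete, at most one vertex of each copy $K_{n_j}$ can carry a non-singleton set-label; denote its position (when it exists) by $a_j$. Each copy then contributes at least $\binom{n-1}{2}$ mono-indexed internal edges, with only the $n-1$ edges incident to $a_j$ possibly escaping. Moreover a rung between $K_{n_j}$ and $K_{n_{j+1}}$ at position $k$ is mono-indexed unless $k\in\{a_j,a_{j+1}\}$, and the weak IASI axiom forces $a_j\neq a_{j+1}$, so each of the $m$ rung families contributes at least $n-2$ mono-indexed edges. These bounds are simultaneously achievable by the explicit labeling that alternates $a_j$ between two fixed positions of $K_n$, places pairwise disjoint non-singleton subsets of $\mathbb{N}_0$ at the distinguished vertices, and uses fresh singletons elsewhere, with integers spread far enough apart that both $f$ and $f^+$ remain injective (as in the proof of Theorem~\ref{P-WPr-PP1}). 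Combining yields
\[
\varphi(K_n\times P_m)=(m+1)\binom{n-1}{2}+m(n-2)=\tfrac{1}{2}(n-2)\bigl[(m+1)(n+1)-2\bigr].
\]

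The main obstacle I anticipate is arithmetic rather than conceptual: this derivation produces the factor $(n-2)$, whereas the statement records $(n-1)$. A careful consistency check against small cases will be essential, in particular $n=2$ (where $K_2\times P_m$ is a bipartite ladder and must yield $\varphi=0$) and $n=3$ (where the answer must coincide with the prism value $2m+1$ obtained in the proposition preceding Theorem~\ref{T-NWFN}). These checks will determine whether the $(n-1)$ in the statement is a typographical artifact for $(n-2)$, or whether a subtler constraint on the weak IASI has been overlooked that would sharpen the bound in the direction claimed.
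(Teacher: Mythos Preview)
Your approach is exactly the paper's: view $K_n\times P_m$ as $m+1$ copies of $K_n$ linked by $m$ rung families, observe that each copy carries at most one non-singleton vertex, alternate that distinguished vertex between two fixed positions, and count $(m+1)\binom{n-1}{2}+m(n-2)$ mono-indexed edges. Your arithmetic is also correct, and your suspicion is well founded: the paper's own computation reaches precisely $\tfrac{1}{2}(m+1)(n-1)(n-2)+m(n-2)$ and then mis-simplifies, dropping the common factor $(n-2)$ and writing $(n-1)$ in its place in the displayed formula. Your sanity checks at $n=2$ (bipartite ladder, $\varphi=0$) and $n=3$ (prism, $\varphi=2m+1$) settle the matter---the correct sparing number is $\tfrac{1}{2}(n-2)\bigl[(m+1)(n+1)-2\bigr]$, and the $(n-1)$ in the statement is a typographical error, not a subtler constraint you have missed.
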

\begin{proof}
The path $P_m$ has $m+1$ vertices, we have $m+1$ copies of $K_n$ in $K_n\times P_m$. By Theorem \ref{T-WKN}, one vertex of each copy of $K_n$ can have at most one vertex that is not mono-indexed. Also, note that the corresponding vertices of the $i$-th and $(i+1)$-th copies are adjacent in $K_n\times P_m$ and hence can not have non-singleton set-labels simultaneously. Let $u_{ij}$ be the $i$-th vertex of the $j$-th copy of $K_n$. Then, for odd  values of $j$, label the vertex $u_{1j}$ by distinct non-singleton sets and for even values of $j$, label the vertex $u_{2j}$ by distinct non-singleton sets. 

Now, by Theorem \ref{T-WKN}, each copy of $K_n$ has $\frac{1}{2}(n-1)(n-2)$ mono-indexed edges. Here, for $1\le j\le m$, the edges $u_{1,j}u_{1,j+1}$ and $u_{2,j}u_{2,j+1}$ have non-singleton set-labels. That is, there are $(n-2)$ mono-indexed edges connecting the $j$-th and $(j+1)$-th copy of $K_n$. Therefore, the total number of mono-indexed edges in $K_n\times P_m$ is $\frac{1}{2}(m+1)(n-1)(n-2)+m(n-2)= \frac{1}{2}[m(n+1)+(n-1)]=\frac{1}{2}(n-1)[(m+1)(n+1)-2]$
\end{proof}

In the following theorem, we estimate the sparing number of the cartesian product of a cycle and a complete graph.

\begin{theorem}
The sparing number of the cartesian product of a complete graph $K_n$ and a cycle $C_m$ is 
\begin{equation*}
\varphi(K_n\times C_m)=
\begin{cases}
\frac{1}{2}m(n+1)(n-2) &  \text{if $m$ is even}\\
\frac{1}{2}(n+1)[m(n-2)+2] &  \text{if $m$ is odd}.
\end{cases}
\end{equation*}
\end{theorem}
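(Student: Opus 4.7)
The plan is to adapt the labeling strategy used in Theorem \ref{T-KnxPm}. View $K_n \times C_m$ as $m$ copies $K_{n_1},\dots,K_{n_m}$ of $K_n$ attached along the cycle $C_m$, and write $u_{i,j}$ for the $i$-th vertex of the $j$-th copy, so that $u_{i,j}$ and $u_{i,j+1}$ (indices modulo $m$) are joined for every $i$. By Theorem \ref{T-WKN}, each copy $K_{n_j}$ contributes at least $\binom{n-1}{2}$ mono-indexed edges, with equality exactly when a single vertex $u_{s_j,j}$ is the only non-singleton vertex in $K_{n_j}$. Because $u_{s_j,j}$ and $u_{s_j,j+1}$ are adjacent across the join, the positions must satisfy $s_j\neq s_{j+1}$, in which case precisely $n-2$ of the $n$ edges joining $K_{n_j}$ to $K_{n_{j+1}}$ are mono-indexed.

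When $m$ is even, I would set $s_j=1$ for odd $j$ and $s_j=2$ for even $j$; this choice is consistent around $C_m$, so the $m$ within-copy contributions of $\binom{n-1}{2}$ and the $m$ between-copy contributions of $n-2$ combine to $m\binom{n-1}{2}+m(n-2)=\frac{1}{2}m(n+1)(n-2)$.

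When $m$ is odd, the two-valued alternation no longer closes around $C_m$. Mirroring the trick used for odd prisms in the previous proposition, I would force one copy, say $K_{n_m}$, to be $1$-uniform and alternate $s_j\in\{1,2\}$ across the remaining $m-1$ copies. This costs $\binom{n}{2}$ instead of $\binom{n-1}{2}$ mono-indexed edges inside $K_{n_m}$, and $n-1$ instead of $n-2$ mono-indexed edges in each of the two joins incident to $K_{n_m}$. Collecting the contributions $(m-1)\binom{n-1}{2}+\binom{n}{2}+(m-2)(n-2)+2(n-1)$ should simplify to $\frac{1}{2}(n+1)[m(n-2)+2]$.

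The main obstacle is the matching lower bound in the odd case. The within-copy bound $m\binom{n-1}{2}$ is automatic from Theorem \ref{T-WKN}. For the between-copy edges, I would exploit the fact that $K_n\times C_m$ contains $n$ vertex-disjoint copies of $C_m$ (the columns obtained by fixing the first coordinate), and apply Theorem \ref{T-NME} to each: when $m$ is odd, every such column must contain an odd number of mono-indexed edges. Reconciling these $n$ parity constraints with the alternation requirement $s_j\neq s_{j+1}$ around an odd cycle should force the extra $n+1$ mono-indexed edges predicted by the formula, and this is where the delicate combinatorial accounting will lie.
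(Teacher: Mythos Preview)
Your construction is exactly the paper's: alternate the non-singleton vertex between positions $1$ and $2$ across the copies, and in the odd case make the $m$-th copy $1$-uniform, yielding the same count $(m-1)\binom{n-1}{2}+\binom{n}{2}+(m-2)(n-2)+2(n-1)$. The only difference is that you flag the need for a matching lower bound and sketch a parity argument via Theorem~\ref{T-NME}; the paper does not supply any such lower-bound justification and simply presents the labeling as optimal.
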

\begin{proof}
Here, we consider the following cases.

\noindent {\em Case-1:} Let $m$ be even. Then, as mentioned in the proof of Theorem \ref{T-KnxPm}, label the vertex $u_{1j}$ by non-singleton sets, for odd  values of $j$ and label the vertex $u_{2j}$ by non-singleton sets for even values of $j$. Therefore, as explained in Theorem \ref{T-KnxPm}, the total number of mono-indexed edges is  $m\frac{1}{2}(n-1)(n-2)+m(n-2)=\frac{1}{2}m(n+1)(n-2)$.

\noindent {\em Case-2:} Let $m$ be odd. Then, $m-1$ copies of  $K_n$ can be labeled as in Case-1 and $m$-th copy must be $1$-uniform. There is exactly one edge between the $m$-th copy and first copy of $K_n$ as well as the $m$-th copy and $(m-1)$-th copy of $K_n$, that is not mono-indexed. Therefore, the number of mono-indexed edges in $K_n\times P_n$ is $(m-1)\frac{1}{2}(n-1)(n-2)+(m-2)(n-2)+2(n-1)+\frac{1}{2}n(n-1)=\frac{1}{2}(n+1)[m(n-2)+2]$.
\end{proof}

In the following discussions, we intend to investigate about the sparing number of the cartesian product of two graphs, at least one of which is a complete bipartite graph. If both the graphs are bipartite, then their cartesian product will also be a bipartite graph and hence its sparing number is $0$. Hence, we need not study the cases when the second graph is a path or an even cycle. Therefore, we examine the sparing number of $K_{m_1, m_2} \times C_n$ where $n$ is an odd integer in the following theorem.

\begin{theorem}
For any odd integer $n$ and for the integers $m_1\le m_2$, the sparing number of $K_{m_1, m_2} \times C_n$ is $m_1(m_2+1)$.
\end{theorem}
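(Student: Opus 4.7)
My strategy is to view $K_{m_1,m_2}\times C_n$ as $n$ copies of the bipartite graph $K_{m_1,m_2}$, labelled $K^{(1)},\ldots,K^{(n)}$, with corresponding vertices of $K^{(i)}$ and $K^{(i+1)}$ (indices modulo $n$) joined by a cross edge. Write the bipartition of each copy as $A_i\cup B_i$ with $|A_i|=m_1$ and $|B_i|=m_2$. For the upper bound I would employ the same alternating construction that already produced the key estimates in Theorems \ref{P-WPr-PP1} and \ref{T-KnxPm}: in odd-indexed copies label the $A$-vertices by distinct non-singleton sets and the $B$-vertices by distinct singletons, and in even-indexed copies swap the two roles. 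Under this labeling every edge internal to any of the first $n-1$ copies, and every cross edge between consecutive copies among $K^{(1)},\ldots,K^{(n-1)}$, has exactly one non-singleton endpoint, so none of them is mono-indexed.

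Since $n$ is odd, a parity obstruction appears at $K^{(n)}$, which is simultaneously adjacent to $K^{(n-1)}$ (of ``$B$-non-singleton'' type) and to $K^{(1)}$ (of ``$A$-non-singleton'' type). The only way to preserve the weak IASI condition across both of these cycle-closing boundaries is to label every vertex of $K^{(n)}$ by a distinct singleton, whereupon all $m_1m_2$ edges internal to $K^{(n)}$ become mono-indexed, together with certain cross mono-indexed edges on its two boundaries. I would then tally these boundary edges explicitly (the singleton-to-singleton $A$-$A$ edges out to $K^{(n-1)}$ and the singleton-to-singleton $B$-$B$ edges out to $K^{(1)}$) to produce the upper bound.

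For the lower bound I would use the structural fact that the set of non-singleton vertices must be independent in $K_{m_1,m_2}\times C_n$. Within each copy, the non-singletons therefore lie wholly in $A_i$ or wholly in $B_i$, and for each vertex $x$ of $K_{m_1,m_2}$ the set of indices $i$ at which $(x,v_i)$ is non-singleton forms an independent set of the odd cycle $C_n$, hence of size at most $(n-1)/2$ by Theorem \ref{T-WUOC}. A degree-weighted maximum over such independent sets, subtracted from $|E(K_{m_1,m_2}\times C_n)|=n(m_1m_2+m_1+m_2)$, gives the matching lower bound. The main difficulty I expect lies in the careful accounting at the odd-parity boundary at $K^{(n)}$: I will need to verify whether the cross mono-indexed edges forced by the all-singleton sacrificial copy can be partly absorbed by distributing the parity break across several copies with only partial non-singleton labellings, in order to reach the claimed count $m_1(m_2+1)$ exactly; reconciling this with the naive independent-set bound is, I expect, the crux of the proof.
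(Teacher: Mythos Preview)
Your upper-bound construction is exactly the one the paper uses: alternate which side of the bipartition carries the non-singleton labels in copies $1,\ldots,n-1$, then make copy $n$ entirely mono-indexed. The paper offers only this construction and no separate lower-bound argument, so your independent-set bound is an addition rather than a departure.

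Your hesitation at the final tally is justified and in fact locates an error in the paper's count. With $K^{(n)}$ all singleton you correctly record \emph{two} batches of mono-indexed cross edges, one on each boundary of $K^{(n)}$: in the paper's notation these are the $m_1$ edges $X_1\text{--}X_n$ and the $m_2$ edges $Y_{n-1}\text{--}Y_n$. Together with the $m_1m_2$ internal edges of $K^{(n)}$ this construction produces $m_1m_2+m_1+m_2$ mono-indexed edges, not $m_1(m_2+1)$. The paper's proof lists only the $X_1\text{--}X_n$ batch and so undercounts by $m_2$; the sanity check $m_1=m_2=1$, $n=3$ already exposes this, since $K_{1,1}\times C_3$ is the $3$-prism $C_3\times P_1$, for which the paper's own Proposition gives $\varphi=2n+1=3$, whereas the stated formula gives $m_1(m_2+1)=2$. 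Your independent-set lower bound actually confirms that $m_1m_2+m_1+m_2$ is optimal: every vertex of $K_{m_1,m_2}$ lies on a copy of $C_n$ in the product, hence can be non-singleton in at most $(n-1)/2$ of the $n$ copies, and summing degrees gives at most $(n-1)(m_1m_2+m_1+m_2)$ non-mono-indexed edges out of $n(m_1m_2+m_1+m_2)$ total. So the ``redistribution of the parity break'' you were hoping might reach $m_1(m_2+1)$ does not exist; the step you flagged as the crux cannot be completed to the value claimed.
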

\begin{proof}
Let $(X,Y)$ be the bipartition of $K_{m_1, m_2}$ with $|X|=m_1$ and $|Y|=m_2$. Let $X_i$ and $Y_i$ be the corresponding bipartitions of $K_{m_1, m_2}$ in $K_{m_1, m_2} \times C_n$. Now, label all the vertices of $X_i$ by distinct singleton sets and the vertices of $Y_i$ by distinct non-singleton sets for odd values of $i$ and label all the vertices of $X_i$ by distinct non-singleton sets and the vertices of $Y_i$ by distinct singleton sets for even values of $i$. Then, in the first $n-1$ copies all the corresponding vertices have different types (singleton and non-singleton sets) of set-labels and hence have no mono-indexed edges between them. But, the set-labels of the corresponding vertices of the $n$-th copy and the first copy can not be of different type unless one of them is $1$-uniform. Hence, assume that $m$-th copy of $K_{m_1, m_2}$ is $1$-uniform. Therefore, besides all the edges of  $n$-th copy of $K_{m_1, m_2}$, the edges between the partitions $X_1$ and $X_n$ are also mono-indexed.
the number of mono-indexed vertices in $K_{m_1, m_2} \times C_n$ is $m_1m_2+m_1=m_1(m_2+1)$.
\end{proof}

We, now proceed to determine the sparing number of the cartesian product of a complete graph $K_n$ and a complete bipartite graph $K_{m_1,m_2}$.

\begin{theorem}
The sparing number of $K_{m_1,m_2}\times K_n$ is $(n-1)m_1m_2+\frac{1}{2}n[nm_1+(n-2)m_2]$.
\end{theorem}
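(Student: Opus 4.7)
My plan is to follow the template of Theorem \ref{T-NME2K} and Theorem \ref{T-KnxPm}: count the total edges of $K_{m_1,m_2}\times K_n$, bound from above the edges that can avoid being mono-indexed under any weak IASI, and subtract.

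By Remark \ref{R-CP2G} I decompose $K_{m_1,m_2}\times K_n$ into $n$ \emph{slices}, each a copy of $K_{m_1,m_2}$ indexed by a vertex of $K_n$, together with $m_1+m_2$ \emph{columns} $\{v\}\times V(K_n)$, one per vertex $v\in V(K_{m_1,m_2})$, each inducing a copy of $K_n$. With $(X,Y)$ the bipartition of $K_{m_1,m_2}$ of sizes $m_1$ and $m_2$, the total edge count is $n\,m_1m_2+(m_1+m_2)\binom{n}{2}$. Two structural constraints then govern any weak IASI, both arising from the fact that the non-mono-indexed vertices must form an independent set: (i)~each column $K_n$ contains at most one non-mono vertex, since $\alpha(K_n)=1$; and (ii)~within a single slice, all non-mono vertices lie in one part of the bipartition, because any $(x,y)\in X\times Y$ are adjacent in $K_{m_1,m_2}$.

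For the upper bound on $\varphi$, I would exhibit an explicit weak IASI: label one slice so its $X$-part carries distinct non-singleton sets and its $Y$-part distinct singletons, label a second slice analogously with $Y$-part non-singleton, and label every vertex of the remaining $n-2$ slices by pairwise distinct singletons, using fresh non-negative integers throughout to ensure injectivity of $f$ and of $f^+$. The non-mono set is independent by design, yielding a valid weak IASI, and a direct edge tally produces the claimed formula.

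The hard part will be the matching lower bound. Let $V'$ be the non-mono vertices of an arbitrary weak IASI; since $V'$ is independent, the number of non-mono-indexed edges equals $\sum_{v\in V'}\deg_G(v)$, with $\deg_G$ equal to $m_2+n-1$ on $X$-column vertices and $m_1+n-1$ on $Y$-column vertices. Constraints (i) and (ii) cap the $X$- and $Y$-column contributions to $|V'|$ by $m_1$ and $m_2$ respectively, and force their host slices to be disjoint whenever both contributions are nonempty. A case analysis on how the non-mono vertices distribute across the $n$ slices then shows that the construction above maximizes this degree-weighted sum; carrying this optimization cleanly across all admissible distributions is the main obstacle I anticipate.
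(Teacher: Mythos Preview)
Your two–slice construction is strictly better than the one the paper uses, and that is precisely why your ``direct edge tally'' will \emph{not} reproduce the stated formula. In the paper's proof only \emph{one} slice receives non-singleton labels: the part $Y_1$ of the first copy of $K_{m_1,m_2}$ is made non-mono and every other vertex of $G$ is mono-indexed; the author then subtracts the degree sum of $Y_1$ from (an off-by-one version of) the total edge count and obtains $(n-1)m_1m_2+\tfrac{1}{2}n[nm_1+(n-2)m_2]$. Your construction instead makes $X_1\cup Y_2$ non-mono. Since $(x,1)$ and $(y,2)$ are never adjacent, this is a legitimate independent set, and its degree sum is
\[
m_1(m_2+n-1)+m_2(m_1+n-1)=2m_1m_2+(n-1)(m_1+m_2),
\]
which exceeds the paper's $m_2(m_1+n-1)$ by $m_1(m_2+n-1)>0$. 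Consequently your weak IASI has \emph{fewer} mono-indexed edges than the value in the theorem, namely
\[
(n-2)m_1m_2+\tfrac{1}{2}(n-1)(n-2)(m_1+m_2),
\]
so the upper-bound step cannot possibly match the claimed $\varphi$, and the lower-bound optimisation you sketch will, if carried out correctly, also converge on this smaller number rather than the stated one.

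In other words, the gap is not in your method but in the target: your construction already falsifies the statement. A quick sanity check makes this vivid. For $n=2$ the product $K_{m_1,m_2}\times K_2$ is a Cartesian product of two bipartite graphs, hence bipartite, so $\varphi=0$ by Theorem~\ref{T-WUC}; your formula gives $0$, while the statement gives $m_1m_2+m_1>0$. Likewise $K_{1,1}\times K_3$ is the $3$-prism with $9$ edges; your construction leaves $3$ mono-indexed edges, whereas the stated formula yields $8$. The paper's argument goes wrong at the sentence ``only one copy of $K_{m_1,m_2}$ can have a partition of vertices having non-singleton set-labels'': that inference ignores exactly the cross-slice independent pairs your construction exploits. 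So you should not expect your tally to match; rather, your approach corrects the result.
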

\begin{proof}
Let $G=K_{m_1,m_2}\times K_n$. Then, $G$ contains $n$ copies of $K_{m_1,m_2}$ with the corresponding vertices of all copies are adjacent to each other. Then, since no two adjacent vertices can have non-singleton set-labels, only one copy of $K_{m_1,m_2}$ can have a partition of vertices having non-singleton set-labels. That is, $(n-1)$ copies of $K_{m_1,m_2}$ are $1$-uniform in $G$. More over, no edge of the first copy of $K_{m_1,m_2}$ is $1$-uniform.

Let $(X,Y)$ be the bipartition of $K_{m_1,m_2}$ and let $(X_i,Y_i)$ be the corresponding bipartition of its $i$-th copy. Therefore, $|X_i|=|X|=m_1$ and $|Y_i|=|Y|=m_2$, where $1\le i\le n$. Then, the number of vertices in all $X_i$ is $m_1n$ and the number of vertices in all $Y_i$ is $m_2n$. For $1\le i\le n$, degree of a vertex in $X_i$ is $m_2+n$ and the sum of degrees of vertices of $X_i$ in each copy is $m_1(m_2+n)$. Therefore, the total degree of vertices in all $X_i$ in $G$ is $n.m_1(m_2+n)$. Similarly, the total degree of vertices in all $Y_i$ in $G$ is $n.m_2(m_1+n)$. Therefore, the total number of edges in $G$ is $\frac{1}{2}[n.m_1(m_2+n)+n.m_2(m_1+n)]$.

Let $m_1\le m_2$. Without loss of generality, let $Y_1$ be the set of vertices of $G$ having non-singleton set-labels. Then, the number of vertices that are not mono-indexed is the sum of degrees of vertices in $Y_1$. That is, number of vertices that are not mono-indexed in $G$ is $m_2(m_1+n)$. 

Therefore, the number of mono-indexed edges in $G$ is $\frac{1}{2}[n.m_1(m_2+n)+n.m_2(m_1+n)]-m_2(m_1+n) = (n-1)m_1m_2+\frac{1}{2}n[nm_1+(n-2)m_2]$.
\end{proof}

\section{Conclusion}

In this paper, we have discussed about the sparing number of cartesian products of certain graphs which admit weak IASIs. Some problems in this area are still open. We have not studied about the sparing number of the cartesian product of two arbitrary graphs $G_1$ and $G_2$, in our present discussions. Uncertainty in the adjacency pattern of different graphs makes this study complex. An investigation to determine the sparing number of the cartesian product of two arbitrary graphs in terms of their orders, sizes and the number of odd cycles in each of them, seems to be fruitful. The admissibility of weak IASIs by other graph products is also worth studying.

\end{document}